\long\def\symbolfootnote[#1]#2{\begingroup%
\def\thefootnote{\fnsymbol{footnote}}\footnote[#1]{#2}\endgroup}
\newenvironment{customtheorem}[1]
  {\innercustomthm}
  {\endinnercustomthm}
\newenvironment{customcorollary}[1]
  {\innercustomcor}
  {\endinnercustomcor}
\newtheorem{theorem}{Theorem}[section]
\newtheorem{proposition}[theorem]{Proposition}
\newtheorem{lemma}[theorem]{Lemma}
\theoremstyle{definition}
\newtheorem{definition}[theorem]{Definition}
\newtheorem*{namedtheorem}{\theoremname}
\newcommand{\theoremname}{testing}
\DeclareMathSymbol{\Alpha}{\mathalpha}{operators}{"41}
\DeclareMathSymbol{\Beta}{\mathalpha}{operators}{"42}
\DeclareMathSymbol{\Epsilon}{\mathalpha}{operators}{"45}
\DeclareMathSymbol{\Zeta}{\mathalpha}{operators}{"5A}
\DeclareMathSymbol{\Eta}{\mathalpha}{operators}{"48}
\DeclareMathSymbol{\Iota}{\mathalpha}{operators}{"49}
\DeclareMathSymbol{\Kappa}{\mathalpha}{operators}{"4B}
\DeclareMathSymbol{\Mu}{\mathalpha}{operators}{"4D}
\DeclareMathSymbol{\Nu}{\mathalpha}{operators}{"4E}
\DeclareMathSymbol{\Omicron}{\mathalpha}{operators}{"4F}
\DeclareMathSymbol{\Rho}{\mathalpha}{operators}{"50}
\DeclareMathSymbol{\Tau}{\mathalpha}{operators}{"54}
\DeclareMathSymbol{\Chi}{\mathalpha}{operators}{"58}
\DeclareMathSymbol{\omicron}{\mathord}{letters}{"6F}
\newcommand{\N}{\mathbb{N}}
\newcommand{\Z}{\mathbb{Z}}
\newcommand{\ZZ}{{\widehat{\mathbb Z}}}
\def\Aut{\operatorname{Aut}}
\def\GL{\operatorname{GL}}
\def\Out{\operatorname{Out}}
\def\cQ{{\mathcal Q}}
\def\la{\langle}
\def\ra{\rangle}
\def\tS{\widetilde{S}}
\def\hP{{\widehat\Pi}}
\def\dd{\partial}
\def\G{{\Gamma}}
\def\kG{{\check\Gamma}}
\def\hG{{\widehat\Gamma}}
\def\d{{\delta}}
\def\s{{\sigma}}
\def\ssm{\smallsetminus}
\def\ol{\overline}
\def\wh{\widehat}
\def\wt{\widetilde}
\def\da{\downarrow}
\def\sr{\stackrel}
\def\hookra{\hookrightarrow}
\def\co{\colon\thinspace}
\begin{document}

\title[A congruence subgroup property for symmetric mapping class groups]{A congruence subgroup property for symmetric mapping class groups}
 
\author{Marco Boggi}
\address{UFF - Instituto de Matem\'atica e Estat\'{\i}stica - Niter\'oi - RJ 24210-200; Brazil}
\email{marco.boggi@gmail.com}

\begin{abstract}    
We prove the congruence subgroup property for the centralizer of a finite subgroup $G$ in the mapping class group of a hyperbolic oriented 
and connected surface of finite topological type $S$ such that the genus of the quotient surface $S/G$ is at most $2$. 
As an application, we show that torsion elements in the mapping class group of a surface of genus $\leq 2$ are conjugacy distinguished.
\bigskip

\noindent {\bf AMS Math Classification:} 57K20, 20F65, 14G32.

\end{abstract}

\maketitle

\section{Introduction}\label{Intro}
\subsection{The congruence subgroup property}
Let $S$ be an oriented connected surface of negative Euler characteristic and finite type and let $G$ be a finite group acting faithfully on $S$. 
The group $G$ then identifies with a finite subgroup of the mapping class group $\G(S)$ of the surface $S$ and let us denote by $\G(S)^G$ 
the centralizer of $G$ in $\G(S)$. Let $\Pi$ be the fundamental group of $S$ for some choice of base point. 
There is a natural faithful representation $\rho_S\co\G(S)\hookra\Out(\Pi)$ 
which restricts to a  faithful representation $\rho_{S,G}\co\G(S)^G\hookra\Out(\Pi)$. 

For an abstract group $L$, let us denote by $\wh{L}$ its profinite completion. The homomorphisms $\rho_S$ and $\rho_{S,G}$ then induce the
homomorphisms of profinite groups:
\begin{equation}\label{proGrepr}
\hat\rho_S\co\hG(S)\to\Out(\hP)\hspace{1cm}\mbox{and}\hspace{1cm}\hat\rho_{S,G}\co\hG(S)^G\to\Out(\hP).
\end{equation}
The \emph{congruence subgroup problem for the mapping class group $\G(S)$ (resp.\ for $\G(S)^G$)} 
asks whether the homomorphism $\hat\rho_S$ (resp.\ $\hat\rho_{S,G}$) is faithful. 

More conventionally, this question can be reformulated by asking whether, for every finite index subgroup 
$\G$ of $\G(S)$ (resp.\ of $\G(S)^G$), there exists a finite index characteristic subgroup $K$ of $\Pi$ such that 
the kernel of the induced representation $\G(S)\to\Out(\Pi/K)$ (resp.\ $\G(S)^G\to\Out(\Pi/K)$) is contained in $\G$.
The first question is known to have a positive
answer for $g(S)\leq 2$ (cf.\ \cite{Asada}, \cite{hyp}, \cite{CongTop}), where $g(S)$ denotes the genus of $S$. 

Let $S_{/G}:=S/G$ be the quotient of the surface $S$ for the action of $G$ and let $p_G\co S\to S_{/G}$ be the quotient map. 
Let $R$ and $B$ be, respectively, the ramification and the branch locus of $p_G$ and put $S^\circ:=S\ssm R$ and $S_{/G}^\circ:=S_{/G}\ssm B$.
The main result of the paper is then (cf.\ Theorem~\ref{congrel} for a slightly more general result):

\begin{customtheorem}{A}Let $S$ be an oriented connected hyperbolic surface of finite type and $G$ a finite group acting faithfully on $S$.
If the congruence subgroup property holds for the mapping class group $\G(S_{/G}^\circ)$ of the surface $S_{/G}^\circ$, then it holds for 
the centralizer $\G(S)^G$ of $G$ in the mapping class group $\G(S)$ of $S$.
\end{customtheorem}

In particular, we have:

\begin{customcorollary}{B}If $g(S_{/G})\leq 2$, then the congruence subgroup property holds for $\G(S)^G$.
\end{customcorollary}

\subsection{Conjugacy separability of torsion elements in mapping class groups}
An element $x$ of a group $G$ is \emph{conjugacy distinguished} if, for any $y\in G$, which is not conjugated to $x$ in $G$ 
(briefly $y\not\sim_G x$), there is a finite quotient $\pi\co G\to L$ such that $\pi(y)\not\sim_L \pi(x)$. 

Similarly, a subgroup $H$ of $G$ is \emph{subgroup conjugacy distinguished} if, for any subgroup $K$ of $G$, 
which is not conjugated to $H$ in $G$ (briefly $H\not\sim_G K$), there is a finite quotient $\pi\co G\to L$ such that $\pi(H)\not\sim_L \pi(K)$.

For a subgroup $H$ of a group $G$, we denote by $Z_H(G)$ and $N_H(G)$, respectively, the centralizer and the normalizer of $H$ in $G$. 
For a subset $A$ of a profinite group $R$, we denote by $\ol{A}$ the closure of $A$ for the profinite topology of $R$. We then have
(cf.\ Theorem~\ref{torsionrelative} for a slightly more general result):

\begin{customtheorem}{C}Let $S$ be an oriented connected hyperbolic surface of finite type of genus $\leq 2$. 
\begin{enumerate}
\item The natural monomorphism $\G(S)\hookra\hG(S)$ induces a bijection between conjugacy classes
of solvable finite subgroups. In particular, solvable finite subgroups of $\G(S)$ are subgroup conjugacy distinguished. 
\item Torsion elements of $\G(S)$ are conjugacy distinguished.
\item For every finite subgroup $G$ of $\G(S)$, there holds $Z_{\hG(S)}(G)=\ol{Z_{\G(S)}(G)}$.\\ 
\end{enumerate}
\end{customtheorem}

\section{Preliminary results}
\subsection{Normalizers and centralizers of finite subgroups of the mapping class group of a surface}\label{centrasec}
It is clearly not restrictive to assume that $S$ has empty boundary and then that it is obtained from a closed surface removing a finite set of points.
With the notations of Section~\ref{Intro}, the cover $p_G\co S\to S_{/G}$ determines a normal finite index subgroup $K\cong\pi_1(S^\circ)$ 
of the fundamental group $\Pi_{/G}: =\pi_1(S_{/G}^\circ)$ (we omit base points). 

Let $\G(S_{/G}^\circ)_K$ be the finite index subgroup of the mapping class group $\G(S_{/G}^\circ)$ which consists of the mapping classes which 
preserve the normal subgroup $K$ of $\Pi_{/G}$. The normalizer $N_{\G(S)}(G)$ of the finite subgroup $G\cong\left.\Pi_{/G}\right/K$ 
in the mapping class group $\G(S)$ is then described by the short exact sequence:
\begin{equation}\label{normalizerG}
1\to G\to N_{\G(S)}(G)\to\G(S_{/G}^\circ)_K\to 1.
\end{equation}

Let $Z(G)$ be the center of the group $G$ and let $\G[G]$ be the kernel of the natural representation $\G(S_{/G}^\circ)_K\to\Out(G)$.
The centralizer $\G(S)^G\leq N_{\G(S)}(G)$ of the subgroup $G$ in the mapping class group $\G(S)$ 
is then described by the short exact sequence:
\begin{equation}\label{centralizerG}
1\to Z(G)\to\G(S)^G\to\G[G]\to 1.
\end{equation}

The group $G$ is the covering transformation group of the unramified cover $p_G^\circ\co S^\circ\to S_{/G}^\circ$ and thus also identifies 
with a finite subgroup of the mapping class group $\G(S^\circ)$ of $S^\circ$.
From the short exact sequences~\eqref{normalizerG} and~\eqref{centralizerG}, it follows:

\begin{proposition}\label{comparison}The natural epimorphism of mapping class groups $q^\circ\co\G(S^\circ)\to\G(S)$, associated to the embedding
of $S^\circ$ in $S$, induces the isomorphisms: 
\[N_{\G(S^\circ)}(G)\cong N_{\G(S)}(G)\hspace{1cm}\mbox{and}\hspace{1cm}\G(S^\circ)^G\cong\G(S)^G.\]
\end{proposition}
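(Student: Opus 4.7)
The plan is to deduce both isomorphisms simultaneously by comparing, via the five lemma, the short exact sequences~\eqref{normalizerG} and~\eqref{centralizerG} for $S$ with their natural counterparts for $S^\circ$. Since $G$ acts faithfully on $S^\circ=S\ssm R$ with the same quotient $S_{/G}^\circ$, and since the constructions leading to~\eqref{normalizerG} and~\eqref{centralizerG} depend only on the pair (cover, acting group), repeating them for $S^\circ$ produces exact sequences
\[1\to G\to N_{\G(S^\circ)}(G)\to\G(S_{/G}^\circ)_K\to 1\quad\text{and}\quad 1\to Z(G)\to\G_G(S^\circ)\to\G^K\to 1,\]
whose right-hand terms coincide with those appearing for $S$.

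Next, I would verify that the filling-in epimorphism $q^\circ\co\G(S^\circ)\twoheadrightarrow\G(S)$ sends $N_{\G(S^\circ)}(G)$ to $N_{\G(S)}(G)$ and $\G_G(S^\circ)$ to $\G_G(S)$, and that its restriction to the finite subgroup $G$ (and hence to $Z(G)$) is the identity onto the corresponding subgroup on the other side. The inclusion of normalizers and centralizers is immediate from functoriality of $q^\circ$ with respect to conjugation; the second assertion holds because the elements of $G\leq\G(S^\circ)$ are represented by diffeomorphisms coming from the $G$-action on $S$, which fix $R$ pointwise and therefore survive the filling-in map.

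The key compatibility to establish is that $q^\circ$ intertwines the projections to $\G(S_{/G}^\circ)_K$ and, on the centralizer subsequences, to $\G^K$. Concretely, one must show that if $\varphi$ is a $G$-equivariant diffeomorphism of $S^\circ$ representing $[\varphi]\in N_{\G(S^\circ)}(G)$, then its descent to $S_{/G}^\circ$ coincides with the descent of the extension $\bar\varphi$ of $\varphi$ across $R$. This is really a functoriality statement for the descent construction with respect to filling in $G$-fixed punctures. I expect this to be the main point requiring attention, since one must argue that a $G$-equivariant isotopy in $S$ can be arranged to fix the finite ramification locus $R$ pointwise; but this is a standard averaging argument given that $R$ is the fixed point set of the finite group $G$.

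Once these compatibilities are in place, the five lemma applied to the resulting commutative diagram with identical outer columns immediately gives $N_{\G(S^\circ)}(G)\cong N_{\G(S)}(G)$, and its restriction to the centralizer sequences gives $\G_G(S^\circ)\cong\G_G(S)$.
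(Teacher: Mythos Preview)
Your approach is essentially the same as the paper's: apply the short exact sequence~\eqref{normalizerG} (and~\eqref{centralizerG}) to both $S^\circ$ and $S$, observe that the outer terms coincide (because $G$ acts freely on $S^\circ$ with quotient $S_{/G}^\circ$ and the same $K$), and conclude by the five lemma from the resulting commutative diagram. The paper simply writes down the diagram for the normalizer case and says the centralizer case is similar, without spelling out the compatibility checks you outline; your additional remarks on why $q^\circ$ intertwines the descent maps are reasonable elaborations but not a different strategy.
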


\begin{proof}Let us just prove the first isomorphism, the second one is proved similarly. By the short exact sequence~\eqref{normalizerG},
applied to the surfaces $S^\circ$ and $S$, respectively, there is a commutative diagram with exact rows:
\[\begin{array}{ccccccc}
1\to&G&\to&N_{\G(S^\circ)}(G)&\to&\G(S_{/G}^\circ)_K&\to 1\,\\
&\parallel&&\da^{q^\circ}&&\parallel&\\
1\to&G&\to&N_{\G(S)}(G)&\to&\G(S_{/G}^\circ)_K&\to 1,
\end{array}\]
from which the conclusion follows.
\end{proof}

\subsection{The congruence topology}\label{congtopology}
The \emph{congruence topology} on $\G(S)$ is the subspace topology induced by the standard profinite topology on $\Out(\hP)$ 
via the natural monomorphism $\G(S)\hookra\Out(\hP)$. The congruence topology on $\G(S)$ then induces a topology on its subgroups
which we also call the \emph{congruence topology}.

Let $\G(m)^G$ be the kernel of the natural representation $\G(S)^G\to\GL(H_1(S,\Z/m))$ and let $\G[G,m]$
be its image via the natural epimorphism $\G(S)^G\to\G[G]$. For $m\geq 3$, it is well known that the action of $G$ on $H_1(S,\Z/m)$
is faithful. Hence, there holds $\G(m)^G\cap G=\{1\}$ and so the map
$\G(S)^G\to\G[G]$ induces an isomorphism $\G(m)^G\cong\G[G,m]$. 

\begin{lemma}\label{congsub}For all $m\geq 2$, the finite index subgroups $\G(m)^G$ of $\G(S)^G$ and $\G[G,m]$ of $\G(S_{/G}^\circ)$
are both open for the respective congruence topologies.
\end{lemma}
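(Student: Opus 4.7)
The plan is, for each of the two finite-index subgroups, to exhibit an explicit open subgroup of the congruence topology contained in it. For $\G_G^{(m)}(S)$ the argument is essentially formal: its defining representation factors as
\[
\G_G(S)\hookra\Out(\hP)\to\GL(H_1(\hP,\Z/m))=\GL(H_1(S,\Z/m)),
\]
and the second arrow is continuous with open kernel because its target is finite. Pulling back gives openness of $\G_G^{(m)}(S)$ in the congruence topology.

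For $\G^{K,(m)}$ the plan is to build a characteristic finite-index subgroup $N$ of $\Pi_{/G}$ contained in $K^{(m)}:=[K,K]K^m$, and then to check that the kernel $U$ of the (continuous) action of $\G(S_{/G}^\circ)$ on the finite quotient $\Pi_{/G}/N$ lies inside $\G^{K,(m)}$. A convenient choice is $N:=\bigcap_{\phi\in\Aut(\Pi_{/G})}\phi(K^{(m)})$: all of the $\phi(K^{(m)})$ share one fixed finite index, so the intersection is finite and hence characteristic and of finite index, and by construction $N\subseteq K^{(m)}\subseteq K$.

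For $[\phi]\in U$ I would adjust the lift $\phi\in\Aut(\Pi_{/G})$ by an inner automorphism so that $\phi\equiv\mathrm{id}\pmod N$. Since $N\subseteq K$ this forces $\phi(K)=K$ and the trivial action of $\phi$ on $G=\Pi_{/G}/K$, placing $[\phi]$ in $\G^K$. Since $N\subseteq K^{(m)}$, the restriction $\phi|_K$ acts trivially on $H_1(K,\Z/m)=K/K^{(m)}$. A short commutator calculation, using the triviality of $\phi$ on $G$, shows that the outer class $[\phi|_K]\in\Out(K)$ centralises the image of $G$, hence defines an element $\tilde\phi\in\G_G(S^\circ)\cong\G_G(S)$ (Proposition~\ref{comparison}) lifting $[\phi]$ through the central extension $1\to Z(G)\to\G_G(S)\to\G^K\to 1$. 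Finally, the inclusion $S^\circ\hookra S$ induces a surjection $H_1(K,\Z/m)\twoheadrightarrow H_1(S,\Z/m)$ intertwining the actions, so $\tilde\phi\in\G_G^{(m)}(S)$, whence $[\phi]\in\G^{K,(m)}$.

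The main obstacle I expect is the selection, within the central extension $1\to Z(G)\to\G_G(S)\to\G^K\to 1$, of a lift that actually acts trivially on $H_1(S,\Z/m)$: the $Z(G)$-translates of a given lift need not all share this behaviour on homology, so one must control not just existence but compatibility of the lift. The reason for choosing $N\subseteq K^{(m)}$ is precisely that the canonical lift produced from $\phi|_K$ already has trivial action on $H_1(K,\Z/m)$, and therefore on $H_1(S,\Z/m)$, bypassing the need to choose among $Z(G)$-translates.
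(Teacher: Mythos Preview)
Your argument is correct. For $\G_G^{(m)}(S)$ you give exactly the factorisation through $\Out(\hP)\to\GL(H_1(S,\Z/m))$ that the paper dismisses as ``obvious''. For $\G^{K,(m)}$ the paper does not argue at all but simply refers to \cite[Corollary~2.3]{sym}; your construction---take a characteristic finite-index $N\leq\Pi_{/G}$ inside $K^{(m)}$, and show that the congruence subgroup $U$ defined by $\Pi_{/G}/N$ lies in $\G^{K,(m)}$---is the natural approach and almost certainly what lies behind that citation. The commutator check that $[\phi|_K]$ centralises the image of $G$ in $\Out(K)$ is right, as is the descent from trivial action on $H_1(K,\Z/m)$ to trivial action on $H_1(S,\Z/m)$ via the surjection induced by $S^\circ\hookra S$.

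Two points worth tightening. First, ``the intersection is finite'' should read: the set $\{\phi(K^{(m)}):\phi\in\Aut(\Pi_{/G})\}$ is finite, since a finitely generated group has only finitely many subgroups of a given finite index. Second, you assert that $[\phi|_K]$ lies in $\G_G(S^\circ)$, but strictly speaking you have only placed it in the centraliser of $G$ inside $\Out(K)$; the fact that it lands in $\G(S^\circ)\subset\Out(K)$ uses the surjectivity in the short exact sequence~\eqref{normalizerG} applied to $S^\circ$ (equivalently, that a mapping class on $S_{/G}^\circ$ preserving $K$ lifts to a mapping class on $S^\circ$, and your $\phi|_K$ is such a lift). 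This is standard covering-space theory, but making it explicit would close the only visible seam in the argument.
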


\begin{proof}The statement is obvious for $\G(m)^G$. For $\G[G,m]$,
it can be proved with the same argument of \cite[Corollary~2.3]{sym}. 
\end{proof}

\subsection{The centralizer of a finite subgroup in the profinite completion of a good group}
Let $\iota\co G\to\wh{G}$ be the natural homomorphism of a group to its profinite completion. The group $G$ is \emph{good}
if, for every finite discrete $\wh{G}$-module $M$, the homomorphism induced on cohomology $\iota^\ast\co H^i(\wh{G},M)\to H^i(G,M)$ 
is an isomorphism for all $i\geq 0$. Let us recall the following definition (cf.\ \cite[Definition~1.2]{BZ}):

\begin{definition}\label{fct}A group $G$ is of \emph{finite virtual cohomological type} if:
\begin{enumerate}
\item $G$ has finite virtual cohomological dimension;
\item for every finite $G$-module $M$, the cohomology $H^i(G,M)$ is finite for $i\geq 0$.
\end{enumerate}
\end{definition}

As a corollary of \cite[Theorem~A]{BZ}, we then have:

\begin{proposition}\label{centgood}Let $G$ be a good, residually finite group of finite virtual cohomological type 
and let $\iota\co G\hookra\wh{G}$ be the natural monomorphism. Then, for every finite subgroup $H$ of $G$, 
we have the identity $Z_{\wh{G}}(\iota(H))=\ol{\iota(Z_G(H))}$.
\end{proposition}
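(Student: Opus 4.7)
The plan is to obtain this as a direct consequence of Theorem~A of~\cite{BZ}. The inclusion $\ol{\iota(C_G(H))}\subseteq C_{\wh{G}}(\iota(H))$ is formal: for each $h\in H$ the set $\{\hat g\in\wh G\mid \hat g\,\iota(h)=\iota(h)\hat g\}$ is closed in $\wh G$, so $C_{\wh{G}}(\iota(H))$ is a closed subgroup, and it plainly contains the image $\iota(C_G(H))$.

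For the reverse inclusion I would argue by approximation. Fix $\hat g\in C_{\wh G}(\iota(H))$; it is enough to show that, for every open normal subgroup $\wh U$ of $\wh G$, there exists $c\in C_G(H)$ with $\iota(c)\in\hat g\wh U$, because then $\hat g$ belongs to the closure of $\iota(C_G(H))$. Pick any lift $g\in G$ of $\hat g$ modulo $\wh U$. Since $\hat g$ centralizes $\iota(H)$, the commutator map $h\mapsto [g,h]=ghg^{-1}h^{-1}$ takes values in $\iota^{-1}(\wh U)$, and a direct calculation shows that it is a $1$-cocycle for the conjugation action of $H$ on $G$. The task is then to correct $g$ by some $u\in\iota^{-1}(\wh U)$ so that $gu\in C_G(H)$, which amounts to trivializing this cocycle in the finite $H$-module carved out inside $\iota^{-1}(\wh U)$ by a suitably smaller open normal subgroup of $\wh G$.

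This trivialization is precisely the cohomological input packaged by Theorem~A of~\cite{BZ}. Goodness identifies $H^i(G,M)$ with the continuous cohomology of $\wh G$ on every finite discrete $H$-module $M$ arising in this way; the finite virtual cohomological type hypothesis guarantees that these cohomology groups are finite, so a compactness/inverse-limit argument applies; and residual finiteness supplies the embedding $\iota$ itself. Taken together, they force any such ``approximately trivial'' cocycle on the profinite side to come from an honest coboundary on the $G$-side. The main obstacle is exactly this correction step, which is where all three hypotheses are consumed; once it is available, the proposition follows by letting $\wh U$ range over a cofinal system of open neighborhoods of $1$ in $\wh G$ and passing to the limit.
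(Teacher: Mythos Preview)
Your outline diverges from the paper's argument and, as written, has real gaps.

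The paper's proof is a direct reduction: \cite[(iii) of Theorem~A]{BZ} already asserts $C_{\wh G}(\iota(x))=\ol{\iota(C_G(x))}$ for every $p$-torsion element $x\in G$. Since any finite $H$ is generated by finitely many prime-power elements $x_1,\dots,x_k$, one simply intersects:
\[C_{\wh G}(\iota(H))=\bigcap_{i} C_{\wh G}(\iota(x_i))=\bigcap_{i}\ol{\iota(C_G(x_i))}=\ol{\iota(C_G(H))}.\]
No cocycle argument enters.

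Your approximation scheme has two problems. First, the map $h\mapsto[g,h]$ is a \emph{nonabelian} $1$-cocycle with values in the (generally nonabelian, infinite) normal subgroup $U=\iota^{-1}(\wh U)$, not in a ``finite $H$-module''. Passing to a finite quotient $U/V$ and trivializing there only produces $u\in U$ with $gu$ centralizing $H$ \emph{modulo} $V$; it does not yield an element of $C_G(H)$. An inverse limit over $V$ would give a corrector in $\wh U$, not in $U$, so you are back where you started. Second, your appeal to goodness is misdirected: goodness compares $H^i(G,M)$ with $H^i(\wh G,M)$, whereas your cocycle lives in (nonabelian) $H^1(H,-)$ for the finite subgroup $H$, and you offer no mechanism linking the two. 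The sentence ``this trivialization is precisely the cohomological input packaged by Theorem~A of~\cite{BZ}'' is doing all the work, but by the paper's own citation that theorem is \emph{already} the centralizer identity for single $p$-torsion elements --- the conclusion you want in the cyclic case, not a cocycle-trivialization device. Once that input is granted, the intersection argument above is both shorter and complete.
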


\begin{proof}By \cite[(iii) of Theorem~A]{BZ}, for all primes $p>0$ and every $p$-torsion element $x$ of $G$, there holds 
$Z_{\wh{G}}(\iota(x))=\ol{\iota(Z_G(x))}$. The finite group $H$ is generated by a finite subset $\{x_1,\dots,x_k\}$ of $p$-torsion elements 
(for, possibly, different primes $p>0$). We then have:
\[Z_{\wh{G}}(\iota(H))=\bigcap_{i=1}^kZ_{\wh{G}}(\iota(x_i))=\bigcap_{i=1}^k\ol{\iota(Z_G(x_i))}=\ol{\iota(Z_G(H))}.\]
\end{proof}

\section{Proof of Theorem~A}
\subsection{The case of a free action}\label{freeaction}
The first case to consider for the proof of Theorem~A is the one in which $G$ acts freely on $S$, that is to say $R=B=\emptyset$ and so
$\Pi=K$ identifies with a normal finite index subgroup of $\Pi_{/G}=\pi_1(S_{/G})$. 

The congruence subgroup property for $\G(S)^G$ is equivalent to the statement that the congruence topology on this group, as defined 
in Section~\ref{congtopology}, is equivalent to the profinite topology. Since, by Lemma~\ref{congsub}, the subgroup $\G(m)^G$ of $\G(S)^G$ 
is open for the congruence topology, the congruence subgroup property for $\G(S)^G$ follows if we show that the homomorphism:
\[\hat\rho_{S,G}^{(m)}\co\hG(m)^G\to\Out(\hP),\]
induced by the restriction of $\rho_{S,G}$ to $\G(m)^G$, is injective for some $m\geq 2$. 

In order to prove the above claim, we will need the well known lemma: 

\begin{lemma}\label{restriction}Let $\hP$ be a nonabelian profinite surface group, $N$ an open normal subgroup of $\hP$ and $f\in\Aut(\hP)$.
If $f$ restricts to the identity on $N$, then $f$ is the identity.
\end{lemma}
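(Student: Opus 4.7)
The plan is to reduce the statement to the triviality of the centralizer of $N$ in $\hP$, and then to extract that triviality from standard structural facts about profinite surface groups.

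For the reduction, I would fix $g\in\hP$ and $n\in N$; since $N$ is normal, $gng^{-1}$ lies in $N$ and is therefore fixed by $f$, while applying $f$ as a homomorphism yields $f(g)\,n\,f(g)^{-1}$. Equating the two expressions shows that $g^{-1}f(g)$ centralizes every element of $N$, i.e., $g^{-1}f(g)\in Z_{\hP}(N)$. Thus the lemma follows at once as soon as $Z_{\hP}(N)=\{1\}$ is established.

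To prove $Z_{\hP}(N)=\{1\}$, I would first observe that $N$, as an open subgroup of $\hP$, is itself the profinite completion of a finite-index subgroup of the discrete surface group $\Pi$: this subgroup corresponds to a finite unramified cover of $S$ and is still a nonabelian surface group of negative Euler characteristic. By goodness together with the centerlessness of discrete hyperbolic surface groups, $N$ is centerless. Hence $Z_{\hP}(N)\cap N=Z(N)=\{1\}$, and $Z_{\hP}(N)$ injects into the finite quotient $\hP/N$; it is therefore a finite closed normal subgroup of $\hP$. Finally, $\hP$ is torsion-free: being the profinite completion of a good torsion-free group of cohomological dimension at most $2$, any hypothetical nontrivial finite subgroup would force $\mathrm{cd}_p(\hP)=\infty$ for some prime $p$ by Lyndon--Hochschild--Serre, a contradiction. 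So $Z_{\hP}(N)=\{1\}$ and $f=\mathrm{id}_{\hP}$.

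The main obstacle is the appeal to the structure theory of profinite surface groups in the last step, namely the centerlessness of $N$ and the torsion-freeness of $\hP$. Both are standard in the literature on profinite completions of surface groups but they are exactly where the surface-group hypothesis is genuinely being used; the rest of the argument is formal.
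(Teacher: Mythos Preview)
Your proof is correct and follows a genuinely different line from the paper's. The paper (following Asada) chooses two distinct simple generators $a_1,a_2\in\hP$, notes that suitable powers $a_i^n$ lie in $N$, and from $f(xa_i^nx^{-1})=xa_i^nx^{-1}$ deduces that $f(x)x^{-1}$ lies in $N_{\hP}(\la a_i^n\ra)=\ol{\la a_i\ra}$ for $i=1,2$ (the last equality by \cite{Magnus}); since $\ol{\la a_1\ra}\cap\ol{\la a_2\ra}=\{1\}$, this forces $f=\mathrm{id}$. Your reduction to $Z_{\hP}(N)=\{1\}$ is cleaner and more structural, while the paper's argument is more hands-on. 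One small caveat: centerlessness of the profinite surface group $N$ is not a formal consequence of goodness together with centerlessness of the discrete group, as your phrasing suggests; it is a standard fact, but its usual proofs (e.g.\ via the structure of centralizers or normalizers of procyclic subgroups) draw on the same circle of results from \cite{Magnus} that the paper invokes directly. So the two arguments ultimately rest on the same structural input about profinite surface groups, packaged differently.
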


\begin{proof}The proof is essentially the same of \cite[Lemma~8] {Asada}.
Let $a_1,a_2\in\hP$ be distinct simple generators. Then, for some $n\in\N^+$, we have that $a_1^n,a_2^n\in N$ and,
since $N$ is normal, also that $x a_i^n x^{-1}\in N$, for all $x\in\hP$ and $i=1,2$. 

By hypothesis, we have that $f(x a_i^n x^{-1})=f(x)a_i^n f(x)^{-1}=x^{-1}a_i^n x$ and then $f(x)x^{-1}\in N_\hP(\la a_i^n\ra)$, for $i=1,2$. 
Since, by \cite[Proposition~3.6]{Magnus}, there holds $N_\hP(\la a_i^n\ra)=\la a_i\ra$, for $i=1,2$, and $\la a_1\ra\cap\la a_2\ra=\{1\}$,
the conclusion follows.
\end{proof}

We will also need the following remarks.
Since, by hypothesis, the representation $\hat\rho_{S_{/G}}\co\hG(S_{/G})\to\Out(\hP_{/G})$ is faithful, its restriction 
$\hat\rho_{S_{/G}}^{(m)}\co\hG[K,m]\to\Out(\hP_{/G})$ is faithful as well for all $m\geq 2$.
For $m\geq 3$, the natural isomorphism $\G(m)^G\cong\G[G,m]$ induces an isomorphism of profinite completions 
$\hG(m)^G\cong\hG[K,m]$ and $\hP\cong\wh{K}$ identifies with a normal finite index subgroup of 
$\hP_{/G}$ in a way that the representations $\hat\rho_{S,G}^{(m)}$ and $\hat\rho_{S_{/G}}^{(m)}$ are compatible via the above isomorphisms.

Let then $x\in\ker\hat\rho_{S,G}^{(m)}$, put $y:=\hat\rho_{S_{/G}}^{(m)}(x)\in\Out(\hP_{/G})$ and let $\tilde y\in\Aut(\hP_{/G})$ be a lift.
Since $x\in\ker\hat\rho_{S,G}^{(m)}$, by the remarks above, after possibly composing with an inner automorphism of $\hP_{/G}$, 
we can assume that the restriction of $\tilde y$ to $\wh{K}\equiv\hP$ is the identity. By Lemma~\ref{restriction}, this implies that $\tilde y$ 
is the identity and so that $y=1$. Since the representation $\hat\rho_{S_{/G}}^{(m)}$ is faithful, it follows that $x=1$ as well. 
This concludes the proof of Theorem~A for the case of a free action.

\subsection{The general case}
The completions of the groups $\G(S)$ and $\G(S)^G$ with respect to the congruence topologies which we introduced in Section~\ref{congtopology}
are called \emph{procongruence completions} and denoted by $\kG(S)$ and $\kG(S)^G$, respectively. The group $\kG(S)$ is also called the
\emph{procongruence mapping class group}. The congruence subgroup problems for $\G(S)$ and $\G(S)^G$ can then be formulated as the
questions of whether the natural epimorphisms  $\hG(S)\to\kG(S)$ and $\hG(S)^G\to\kG(S)^G$ are, respectively, isomorphisms.

By the results of Section~\ref{freeaction} and with the notations of Section~\ref{Intro}, we know that there is a natural isomorphism 
$\hG(S^\circ)^G\cong\kG(S^\circ)^G$. Therefore, in order to prove the general case of Theorem~A, it is enough to show that the 
natural isomorphism $q^\circ\co\G(S^\circ)^G\sr{\sim}{\to}\G(S)^G$ (cf.\ Proposition~\ref{comparison})
induces an isomorphism $\check{q}^\circ\co\kG(S^\circ)^G\sr{\sim}{\to}\kG(S)^G$ between the respective procongruence completions.

The set $R$ of ramification points of the quotient map $p_G\co S\to S_{/G}$ determines a point, which we denote by $[R]$, 
on the unordered configuration space $S[r]$ of $r$ distinct points on the surface $S$, where $r:=\sharp R$. 

Let $\Pi[r]:=\pi_1(S[r],[R])$. From \cite[Corollary~4.7 and Proposition~4.9]{CongTop}, 
it then follows that there is a short exact sequence of profinite groups:
\begin{equation}\label{Birman}
1\to\hP[r]\to\kG(S^\circ)\sr{\check{q}^\circ}{\to}\kG(S)\to 1.
\end{equation}

In order to prove Theorem~A, it is then enough to show that the subgroup $\kG(S^\circ)^G$ of $\kG(S^\circ)$ has trivial intersection with
the image of $\hP[r]$ in $\kG(S^\circ)$. Since $\kG(S^\circ)^G$ is contained in the centralizer $Z_{\kG(S^\circ)}(G)$, 
the conclusion follows if we prove that $Z_{\kG(S^\circ)}(G)\cap\hP[r]=\{1\}$.

By Proposition~\ref{comparison} and the short exact sequence~\eqref{Birman}, the subgroup $G$ of $\G(S^\circ)$ has trivial intersection  
with $\hP[r]$, so that $\hP[r]\cdot G\cong\hP[r]\rtimes G$ identifies with the profinite completion of $\Pi[r]\cdot G\cong\Pi[r]\rtimes G$.
We are then reduced to prove the following lemma:

\begin{lemma}\label{Gcentconf}There holds $Z_{\hP[r]\cdot G}(G)=Z(G)$.
\end{lemma}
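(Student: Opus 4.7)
\emph{Proof plan.} The plan is to transfer the computation from the profinite group $\hP[r]\cdot G$ to its dense subgroup $\Pi[r]\cdot G$ via Proposition~\ref{centgood}, and then to show, using the (discrete) Birman sequence together with Proposition~\ref{comparison}, that $Z_{\Pi[r]\cdot G}(G)=Z(G)$.

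Setting $H:=\Pi[r]\cdot G\cong\Pi[r]\rtimes G$, the first step is to apply Proposition~\ref{centgood} to $H$ with its finite subgroup $G$. Under the identification of the profinite completion $\hat{H}$ with $\hP[r]\cdot G$ (recorded immediately above the lemma), this yields
\[Z_{\hP[r]\cdot G}(G)=\ol{Z_H(G)},\]
provided that $H$ is good, residually finite, and of finite virtual cohomological type. Finite extensions preserve these three properties, so it suffices to verify them for the surface braid group $\Pi[r]=\pi_1(S[r],R)$: residual finiteness is clear from the embedding $\Pi[r]\hookrightarrow\G(S^\circ)$, finite vcd type follows from $S[r]$ being an aspherical manifold of finite dimension, and goodness of surface braid groups is well known. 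This hypothesis check is the only genuinely delicate point in the argument.

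The remaining task is the discrete computation $Z_H(G)=Z(G)$. A routine check in the semidirect product shows that $(x,g)\in\Pi[r]\rtimes G$ commutes with every element of $G$ if and only if $g\in Z(G)$ and $x\in\Pi[r]^G$, so one is reduced to proving $\Pi[r]^G=\{1\}$. Viewing $\Pi[r]$ and $G$ as subgroups of $\G(S^\circ)$, the condition that $x\in\Pi[r]$ commutes with all of $G$ is simply $x\in\G_G(S^\circ)$, whence $\Pi[r]^G=\Pi[r]\cap\G_G(S^\circ)$. The Birman sequence identifies $\Pi[r]$ with the kernel of $q^\circ\co\G(S^\circ)\to\G(S)$, while Proposition~\ref{comparison} says that $q^\circ$ restricts to an isomorphism $\G_G(S^\circ)\sr{\sim}{\to}\G_G(S)$; intersecting gives $\Pi[r]\cap\G_G(S^\circ)=\{1\}$. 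Combining the two steps and using that $Z(G)$ is finite (hence closed), we obtain $Z_{\hP[r]\cdot G}(G)=\ol{Z(G)}=Z(G)$.
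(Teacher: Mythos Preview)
Your proof is correct and follows essentially the same approach as the paper: both deduce $Z_{\Pi[r]\cdot G}(G)=Z(G)$ from Proposition~\ref{comparison} (via $\G_G(S^\circ)\cap\Pi[r]=\{1\}$) and then invoke Proposition~\ref{centgood} to pass to the profinite completion. The only cosmetic difference is that the paper verifies goodness, residual finiteness, and finite virtual cohomological type by passing to the finite-index pure (ordered) surface braid group and the Lyndon--Hochschild--Serre spectral sequence, whereas you check them directly for the full surface braid group $\Pi[r]$; either route works.
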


\begin{proof}Proposition~\ref{comparison} implies that $\G(S^\circ)^G\cap\Pi[r]=\{1\}$ and then that $Z_{\Pi[r]\cdot G}(G)=Z(G)$.
Since $\Pi[r]\cdot G$ contains, as a finite index subgroup, the fundamental group of the \emph{ordered} configuration space of 
$n$ points on the surface $S$, which is a good, residually finite group of finite virtual cohomological type, by the Lyndon-Hochschild-Serre 
spectral sequence, it follows that $\Pi[r]\cdot G$ has the same properties. Hence, we conclude by Proposition~\ref{centgood}.
\end{proof}

\section{The congruence subgroup property for relative mapping class groups}
For $S$ a surface as above, let us denote by $\dd S$ its boundary and by $\G(S,\dd S)$ the \emph{relative} mapping class group of the 
surface with boundary $(S,\dd S)$, that is to say the group of relative, with respect to the boundary $\dd S$, isotopy classes of orientation 
preserving diffeomorphisms of $S$. Of course, if $\dd S=\emptyset$, then $\G(S,\dd S)=\G(S)$.

Let us observe that, for $\dd S\neq\emptyset$, the natural representation $\G(S,\dd S)\to\Out(\Pi)$ is not faithful. 
Hence, the congruence subgroup property, as formulated in Section~\ref{Intro} does not make sense for relative mapping class groups. 
For this reason, in \cite[Section~2.7]{Aut}, we introduced the following construction.

Let $\s:=\{\delta_1,\ldots,\delta_k\}$ be the set of connected components of $\dd S$ and let $\tS$ be the surface (without boundary) 
obtained from $S$ attaching to each curve $\delta_i$ a  a $2$-punctured closed disc $D_i$, for $i=1,\dots k$. Let then $\G(\tS)_\s$ 
be the stabilizer in $\G(\tS)$ of the set of isotopy classes of the curves contained in $\s$. 

The relative mapping class group $\G(S,\dd S)$ and the product of relative mapping class groups $\prod_{i=1}^k\G(D_i,\dd D_i)$ then identify,
respectively, with a subgroup and a normal subgroup of the stabilizer $\G(\tS)_\s$ and there holds:
\begin{equation}\label{embedding}
\G(\tS)_\s=(\prod_{i=1}^k\G(D_i,\dd D_i))\cdot\G(S,\dd S).
\end{equation}

The identity~\eqref{embedding} identifies the relative mapping class group $\G(S,\dd S)$ with a subgroup of the mapping class group $\G(\tS)$ and we
define the \emph{congruence topology on $\G(S,\dd S)$} as the topology induced, via this embedding, by the congruence topology of $\G(\tS)$. 
The \emph{relative procongruence mapping class group} $\kG(S,\dd S)$ is the associated completion of $\G(S,\dd S)$.

The \emph{congruence subgroup problem} for $\G(S,\dd S)$ then asks whether the natural epimorphism $\hG(S,\dd S)\to\kG(S,\dd S)$ is an isomorphism. 

For a finite group acting faithfully on $S$, we let $\G(S,\dd S)^G$ be the centralizer of $G$ in $\G(S,\dd S)$. We define the congruence
topology on $\G(S,\dd S)^G$ as the one induced by the congruence topology of $\G(S,\dd S)$ and we denote by $\kG(S,\dd S)^G$ the
associated completion. We can then formulate the corresponding congruence subgroup problem for $\G(S,\dd S)^G$.

Let us observe that there is a natural short exact sequence:
\begin{equation}\label{relexseq1}
1\to\prod_{i=1}^k\tau_{\d_i}^\Z\to\G(S,\dd S)\to\G(S)\to 1,
\end{equation}
where $\tau_{\d_i}^\Z\cong\Z$ is the cyclic subgroup of $\G(S,\dd S)$ topologically generated by the Dehn twist about the curve $\d_i$, for $i=1,\dots k$.
In \cite[Section~2.7]{Aut}, it was then observed that the procongruence completion of $\G(S,\dd S)$ induces the short exact sequence:
\begin{equation}\label{relexseq2}
1\to\prod_{i=1}^k\tau_{\d_i}^\ZZ\to\kG(S,\dd S)\to\kG(S)\to 1,
\end{equation}
where $\tau_{\d_i}^\ZZ\cong\ZZ$ is the closure of $\tau_{\d_i}^\Z$ in the procongruence completion $\kG(S,\dd S)$, for $i=1,\dots k$. 

We then have the following generalization of Theorem~A and Corollary~B:

\begin{theorem}\label{congrel}Let $S$ be an oriented connected hyperbolic surface of finite type and $G$ a finite group acting faithfully on $S$.
If the congruence subgroup property holds for the mapping class group $\G(S_{/G}^\circ)$ of the surface $S_{/G}^\circ$, 
then it also holds for the centralizer $\G(S,\dd S)^G$ of $G$ in the relative mapping class group $\G(S,\dd S)$. 
In particular, for $g(S_{/G})\leq 2$, there is a natural isomorphism $\hG(S,\dd S)^G\cong\kG(S,\dd S)^G$.
\end{theorem}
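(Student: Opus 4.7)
The plan is to adapt the argument outlined in the paragraph preceding the theorem (which deduced CSP for $\Gamma(S,\dd S)$ from that of $\Gamma(S)$) to the $G$-centralizers, using Theorem~A to handle the boundaryless case. The idea is to construct matching short exact sequences of $G$-centralizers compatible with \eqref{relexseq1} and \eqref{relexseq2}, and then conclude by the five lemma.

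First, I would restrict \eqref{relexseq1} to the centralizers of $G$. The kernel $\prod_{i=1}^k \tau_{\delta_i}^\Z$ consists of Dehn twists about boundary components, which are central in $\Gamma(S,\dd S)$; in particular they lie in $\Gamma_G(S,\dd S)$, and $G$ acts trivially on this kernel by conjugation. To establish surjectivity of $\Gamma_G(S,\dd S) \to \Gamma_G(S)$, given any $\bar f \in \Gamma_G(S)$ I would pick a lift $y \in \Gamma(S,\dd S)$ and consider
\[
c_y\co G \to \prod_{i=1}^k \tau_{\delta_i}^\Z, \qquad c_y(g) := ygy^{-1}g^{-1}.
\]
A short computation using the centrality of the boundary Dehn twists (namely $ygy^{-1}=c_y(g)\cdot g$ and $y(gh)y^{-1} = c_y(g)gc_y(h)h = c_y(g)c_y(h)gh$) shows $c_y$ is a homomorphism. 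Since $G$ is finite and $\Z^k$ is torsion-free, $\Hom(G,\Z^k) = 0$, forcing $c_y \equiv 1$; equivalently, $y$ already centralizes $G$. This produces the exact sequence
\[
1 \to \prod_{i=1}^k \tau_{\delta_i}^\Z \to \Gamma_G(S,\dd S) \to \Gamma_G(S) \to 1. \qquad (\ast)
\]

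The same cocycle argument applied to \eqref{relexseq2}, now using that $\ZZ^k$ is torsion-free and hence that $\Hom(G,\ZZ^k)=0$, produces the corresponding procongruence sequence
\[
1 \to \prod_{i=1}^k \tau_{\delta_i}^\ZZ \to \kG_G(S,\dd S) \to \kG_G(S) \to 1, \qquad (\ast\ast)
\]
once $\kG_G(S,\dd S)$ and $\kG_G(S)$ are identified with the closures of $\Gamma_G(S,\dd S)$ and $\Gamma_G(S)$ in $\kG(S,\dd S)$ and $\kG(S)$ respectively (surjectivity of the right-hand map follows from compactness plus density of $\Gamma_G(S)$ in $\kG_G(S)$). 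Applying the profinite completion functor to $(\ast)$ gives right-exactness automatically, and injectivity of $\ZZ^k \hookra \hG_G(S,\dd S)$ follows because the natural surjection $\hG_G(S,\dd S) \twoheadrightarrow \kG_G(S,\dd S)$ carries $\ZZ^k$ onto the already injective image appearing in $(\ast\ast)$.

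The two short exact sequences then combine into a commutative ladder whose leftmost vertical arrow is the identity on $\ZZ^k$ and whose rightmost vertical arrow $\hG_G(S) \to \kG_G(S)$ is an isomorphism by Theorem~A (applicable because CSP for $\Gamma(S_{/G}^\circ)$ is assumed). The five lemma then yields the central isomorphism $\hG_G(S,\dd S) \cong \kG_G(S,\dd S)$, which is CSP for $\Gamma_G(S,\dd S)$. The step I expect to be most delicate is the verification of $(\ast\ast)$: one must check that the image of $\kG_G(S,\dd S)$ in $\kG(S)$ really equals $\kG_G(S)$ and that its intersection with $\prod \tau_{\delta_i}^\ZZ$ really equals $\ZZ^k$. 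Both follow from compactness and the fact that $\prod \tau_{\delta_i}^\Z \subset \Gamma_G(S,\dd S)$, but they require care because the procongruence analog of Proposition~\ref{centgood} is not available off the shelf. The remaining ingredients — the cocycle vanishing and the five lemma — are routine.
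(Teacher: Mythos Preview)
Your overall strategy---restrict the short exact sequences~\eqref{relexseq1} and~\eqref{relexseq2} to $G$-centralizers, invoke Theorem~A for the boundaryless quotient, and conclude by the five lemma---is exactly the paper's approach. However, one ingredient is wrong and propagates through the argument.

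The error is the claim that $\prod_{i=1}^k\tau_{\delta_i}^{\Z}$ is \emph{central} in $\G(S,\dd S)$. Under the paper's convention, elements of $\G(S,\dd S)$ may permute the boundary components (cf.\ the surjection $\G(S,\dd S)\to\Sigma_k$ in the paragraph containing~\eqref{embedding}); conjugation by such an element permutes the boundary twists. In particular $G\subset\G(S,\dd S)$ can act nontrivially on this abelian kernel, so an element $a\in\prod_i\tau_{\delta_i}^{\Z}$ lies in $\G_G(S,\dd S)$ if and only if it is $G$-invariant. Hence the kernel in your sequence $(\ast)$ is $(\prod_i\tau_{\delta_i}^{\Z})^G$, not the full $\Z^k$, and likewise $(\prod_i\tau_{\delta_i}^{\ZZ})^G$ in $(\ast\ast)$; this is precisely what the paper writes down.

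The same slip invalidates your surjectivity argument: the map $c_y(g)=ygy^{-1}g^{-1}$ is a \emph{crossed} homomorphism for the conjugation action, not a genuine homomorphism, so $\Hom(G,\Z^k)=0$ is not the relevant vanishing. What you actually need is $H^1(G,\Z^k)=0$ for the permutation $G$-module $\Z^k$, which does hold (decompose into summands $\Z[G/H]$ and use Shapiro plus $\Hom(H,\Z)=0$). With this correction your lift-adjustment argument goes through.

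Finally, having replaced $\Z^k$ by $(\Z^k)^G$ on the left, the five-lemma step requires one more check that you do not mention: that $(\prod_i\tau_{\delta_i}^{\ZZ})^G$ is the profinite completion of $(\prod_i\tau_{\delta_i}^{\Z})^G$. The paper dispatches this by observing that the profinite completion of a finitely generated abelian group induces the full profinite topology on every subgroup. Once these fixes are made, your argument coincides with the paper's.
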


\begin{proof}The group $G$ acts by conjugation on the normal abelian subgroup $\prod_{i=1}^k\tau_{\d_i}^\Z$ of $\G(S,\dd S)$.
Let us denote by $A^G$ the invariants for the action of a group $G$ on an abelian group $A$. We then have the short exact sequence:
\[1\to(\prod_{i=1}^k\tau_{\d_i}^\Z)^G\to\G(S,\dd S)^G\to\G(S)^G\to 1.\]
Similarly, for the procongruence completion, we get the short exact sequence:
\[1\to(\prod_{i=1}^k\tau_{\d_i}^\ZZ)^G\to\kG(S,\dd S)^G\to\kG(S)^G\to 1,\]
where the subgroup of invariants $(\prod_{i=1}^k\tau_{\d_i}^\ZZ)^G$ identifies with the closure of the subgroup $(\prod_{i=1}^k\tau_{\d_i}^\Z)^G$
in $\kG(S,\dd S)^G$. Theorem~\ref{congrel} then follows from the two above exact sequences and Theorem~A if we show that the profinite group 
$(\prod_{i=1}^k\tau_{\d_i}^\ZZ)^G$ is the profinite completion of the group $(\prod_{i=1}^k\tau_{\d_i}^\Z)^G$. But this is clear 
since the profinite completion of a finitely generated abelian group induces the profinite completion on each of its subgroups.
\end{proof}

\section{Conjugacy separability}
We will prove the following more general version of Theorem~C:

\begin{theorem}\label{torsionrelative}For $S$ an oriented connected hyperbolic surface of finite type of genus $\leq 2$, we have:
\begin{enumerate}
\item The natural monomorphism $\G(S,\dd S)\hookra\hG(S,\dd S)$ induces a bijection between conjugacy classes
of solvable finite subgroups. In particular, solvable finite subgroups of $\G(S,\dd S)$ are subgroup conjugacy distinguished. 
\item Torsion elements of $\G(S,\dd S)$ are conjugacy distinguished.
\item For every finite subgroup $G$ of $\G(S,\dd S)$, there holds $Z_{\hG(S,\dd S)}(G)=\ol{Z_{\G(S,\dd S)}(G)}$.\\ 
\end{enumerate}
\end{theorem}

\begin{proof}By \cite[Corollary~C]{BZ}, it is enough to prove that, for $g(S)\leq 2$, the relative mapping class group $\G(S,\dd S)$ 
is good and that, for every finite index subgroup $G$ of $\G(S,\dd S)$, the following conditions are satisfied:
\begin{enumerate}
\item the normalizer $N_{\G(S,\dd S)}(G)$ is also a good group;
\item the closure of the normalizer $N_{\G(S,\dd S)}(G)$ in $\hG(S,\dd S)$ identifies with the profinite completion of $N_{\G(S,\dd S)}(G)$.
\end{enumerate}

The claim that $\G(S,\dd S)$ is good for $g(S)\leq 2$ follows from the fact that $\G(S)$ is a good group for $g(S)\leq 2$ and from the 
Lyndon-Hochschild-Serre spectral sequence applied to the natural homomorphism from the short exact sequence~\eqref{relexseq1} 
to the short exact sequence~\eqref{relexseq2}.

Let us then prove that the normalizer $N_{\G(S,\dd S)}(G)$ is also a good group for $g(S_{/G})\leq 2$ (and so, in particular, for $g(S)\leq 2$).
Let us observe first that there is the following version of the short exact sequence~\eqref{normalizerG} for relative mapping class groups:
\begin{equation}\label{relnorm}
1\to G\to N_{\G(S,\dd S)}(G)\to\G(S_{/G}^\circ,\dd S_{/G}^\circ)_K\to 1.
\end{equation}

Since $\G(S_{/G}^\circ,\dd S_{/G}^\circ)_K$ is a finite index subgroup of $\G(S_{/G}^\circ,\dd S_{/G}^\circ)$ and the latter, as we proved above,
is good for $g(S_{/G})\leq 2$, from Shapiro's lemma, it follows that $\G(S_{/G}^\circ,\dd S_{/G}^\circ)_K$ is also good for $g(S_{/G})\leq 2$.
By the Lyndon-Hochschild-Serre spectral sequence applied to the natural homomorphism which maps the short exact sequence~\eqref{relnorm} 
to its profinite completion, we then have that $N_{\G(S,\dd S)}(G)$ is a good group for $g(S_{/G})\leq 2$. 

By Theorem~\ref{congrel}, for $g(S_{/G})\leq 2$, the closure of the centralizer $\G(S,\dd S)^G$ in $\hG(S,\dd S)$ 
identifies with the profinite completion of $\G(S,\dd S)^G$. Since  $\G(S,\dd S)^G$ is a finite index subgroup of 
the normalizer $N_{\G(S,\dd S)}(G)$, it follows that, for $g(S_{/G})\leq 2$ (and so, in particular, for $g(S)\leq 2$), 
the closure of the normalizer $N_{\G(S,\dd S)}(G)$ in $\hG(S,\dd S)$ identifies with the profinite completion of $N_{\G(S,\dd S)}(G)$.
\end{proof}
 \bigskip
 


\begin{thebibliography}{99}





\bibitem{Asada}M. Asada. \textsl{The faithfulness of the monodromy representations associated
with certain families of algebraic curves.} J. Pure Appl. Algebra {\bf 159}, (2001), 123--147. 





\bibitem{hyp} M. Boggi. \textsl{The congruence subgroup property for the hyperelliptic 
Teichm\"uller modular group: the open surface case}. Hiroshima Math. J. {\bf 39} (2009), 351--362.  


\bibitem{sym} M. Boggi. \textsl{Galois coverings of moduli spaces of curves and loci of curves with symmetry.} 
Geom. Dedicata {\bf 168} (2014), 113--142. 


\bibitem{Magnus} M. Boggi, P.A. Zalesskii. \textsl{A restricted Magnus property for profinite surface groups.}
Trans. Amer. Math. Soc. {\bf 371} (2019), 729--753.

\bibitem{BZ} M.~Boggi, P.A.~Zalesskii. \textsl{Finite subgroups of the profinite completion of good groups}. 
Bull. London Math. Soc. vol {\bf 57}, no. 1 (2025), 236--255.

\bibitem{CongTop}M. Boggi. \textsl{Congruence topologies on the mapping class groups}. J. Algebra {\bf 546} (2020), 518--552. 



\bibitem{Aut}M. Boggi. \textsl{Automorphisms of profinite mapping class groups}. \url{https://arxiv.org/abs/2011.15075} (2024).





























































\end{thebibliography}
\end{document}